\newtheorem{theorem}{Theorem}[section]  \newtheorem{corollary}[theorem]{Corollary}
 \newtheorem{example}[theorem]{Example} 
\renewcommand{\geq}{\geqslant}
\renewcommand{\leq}{\leqslant}
\renewcommand{\ge}{\geqslant}
\renewcommand{\le}{\leqslant}
\def\cref#1{Corollary~$\ref{#1}$}
\begin{document}

\title{Embedding partial Latin squares in Latin squares with many mutually orthogonal mates }

\author{Diane M. Donovan%footnote{This work was carried out during the scientist visit to Ko\c{c} University supported
%by TUBITAK Visiting Scientist program ????}   \\
 \\
School of Mathematics and Physics\\ University of Queensland, Brisbane 4072 Australia \\ \texttt{(dmd@maths.uq.edu.au)}\\ \\
Mike Grannell\\ School of Mathematics and Statistics \\ The Open
University \\ Walton Hall, Milton Keynes MK7 6AA\\ United Kingdom\\ \texttt{(mike.grannell@open.ac.uk)}\\ \\ Emine
\c{S}ule Yaz{\i}c{\i}\footnote{This work was supported by Scientific and
Technological Research Council of Turkey TUBITAK Grant Number: 116F166}\\ Department of Mathematics,\\
 Ko\c{c} University,\\
  Sar{\i}yer,
34450,\\
 \.{I}stanbul, Turkey\\
\texttt{(eyazici@ku.edu.tr)}\\ }

\maketitle

\begin{abstract}

We show that any partial Latin square of order $n$ can be embedded in a Latin square of order at most $16n^2$ which has
at least $2n$ mutually orthogonal mates. We also show that for any
$t\geq 2$, a pair of orthogonal partial Latin squares of order $n$ can be embedded into a set of $t$ mutually
orthogonal Latin squares (MOLS) of order a polynomial with respect to $n$.
Furthermore, the constructions that we provide show that MOLS($n^2$)$\geq$MOLS($n$)+2, consequently we  give a set of $9$ MOLS($576$). The maximum known size of a set of
MOLS($576$) was previously given as $8$ in the literature. \end{abstract}

\section{Introduction}

In 1960 Evans \cite{Evans} showed that it was possible to embed any partial Latin square of order $n$ in some Latin
square of order $t$, for every $t\geq 2n$, where $2n$ is a tight bound. In
the same paper Evans  raised the question of embedding orthogonal partial Latin squares in sets of mutually orthogonal
Latin squares.

The importance and relevance of this question is demonstrated by the prevalence and application of orthogonal Latin
squares  to other areas of mathematics (see \cite{Handbook}). For instance,
the existence of a set of $n-1$ mutually orthogonal Latin squares of order $n$ is equivalent to the existence of a
projective plane of order $n$ (see \cite{Design} for a relevant
construction). Thus results on the embedding of orthogonal partial Latin squares provide information on the embedding
of sets of partial lines in finite geometries. In addition, early
embedding results for partial Steiner triple systems utilised embeddings of partial idempotent Latin squares (see for
example \cite{6n+3}). It has also been suggested that embeddings of block
designs with block size $4$ and embeddings of Kirkman triple systems may  make use of embeddings of pairs of
orthogonal partial Latin squares (see \cite{HRW}).

In 1976 Lindner \cite{Lindner}  showed that a pair of orthogonal partial Latin squares can always be finitely embedded
in a pair of orthogonal Latin squares. However, there was no known
method for obtaining an embedding of polynomial order (with respect to the order of the partial arrays). In \cite{HRW},
Hilton \emph{et al.} formulate some necessary conditions for a pair of
orthogonal partial Latin squares to be embedded in a pair of orthogonal Latin squares. Then in \cite{Jenkins} Jenkins
developed a construction for   embedding a single partial Latin square
of order $n$ in a Latin square of order $4n^2$ for which there exists an orthogonal mate. In  2014, Donovan and
Yaz{\i}c{\i} developed a construction that verified that a pair of orthogonal partial
Latin squares, of order $n$, can be embedded in a pair of orthogonal Latin squares of order at most $16n^4$.

This paper seeks to extend these results, providing new constructions that show that a partial Latin square, of order
$n$, can be embedded in a Latin square, of order at most $16n^2$ with
many mutually orthogonal mates. Further, we develop a second construction for embedding a pair of orthogonal partial
Latin squares of order $n$ in sets of mutually orthogonal Latin squares of
any size where the Latin squares have polynomial order with respect to $n$. Also, as a corollary, the construction can
be used to increase the best known lower bound for the largest set of
MOLS($576$). In the literature the existence of $8$ MOLS($576$) is established. However, we construct $9$ MOLS($576$).

We preface the discussion of our main result with some necessary definitions.

\section{Definitions}

Let  $N=\{\alpha_1,\alpha_2,\ldots,\alpha_n\}$ represent a set of $n$ distinct elements. A non-empty subset $P$ of
$N\times N\times N$  is said to be a {\em  partial Latin square} (PLS($n$)), of order $n$, if for all
$(x_1,x_2,x_3),(y_1,y_2,y_3)\in P$ and for all distinct $i,j,k\in \{1,2,3\}$,
\begin{align*} x_i=y_i\mbox{ and }x_j=y_j\mbox{ implies }x_k=y_k.
\end{align*}
We say that $P$ is \emph{indexed} by $N$. We may think of $P$ as an $n\times n$ array where symbol $e\in N$ occurs in
cell $(r,c)$, whenever $(r,c,e)\in P$, and we will write $e=P(r,c)$.  We say that cell $(r,c)$ is {\em empty} in
$P$ if, for all  $e\in N$, $(r,c,e)\notin P$.  The {\em volume} of $P$ is $|P|$. If $|P|=n^2$, then we say that $P$ is
a {\em Latin square} (LS($n$)), of order $n$. If for all $1\leq i\leq n$, $(\alpha_i,\alpha_i,\alpha_i)\in P$, then $P$
is said to be idempotent. The set of elements $\{(x_1,x_2,x_3)\in P \mid x_1=x_2\}$ forms the main diagonal of $P$.

Two partial Latin squares $P$ and $Q$, of the same order $n$ are said to be {\em orthogonal}, denoted OPLS($n$), if
they have the same non-empty cells and for all $r_1,c_1,r_2,c_2,x,y\in N$
\begin{align*} \{(r_1,c_1,x),(r_2,c_2,x)\}\subseteq  P\mbox{ implies }\{(r_1,c_1,y),(r_2,c_2,y)\}\not\subseteq Q.
\end{align*}

\begin{example} \end{example} \begin{figure}[h!]

\begin{align*} \begin{array}{|c|c|c|c|} \hline
  0 & 1 & 2 &  \\ \hline
  2 & 0 & 1 & 3 \\ \hline
  3 &  & 0 &  \\ \hline
   & 2 &  & 1 \\ \hline
\end{array} &&\begin{array}{|c|c|c|c|} \hline
  0 & 2 & 1 &  \\ \hline
  3 & 1 & 0 & 2 \\ \hline
  1 &  & 2 &  \\ \hline
   & 0 &  & 3 \\ \hline
\end{array} \end{align*}
 \caption{A pair of orthogonal partial Latin squares of order 4}
\end{figure}

This definition extends in the obvious way to a pair {\em orthogonal Latin squares} of order $n$. A set of $t$ Latin
squares, of order $n$, which are pairwise orthogonal are said to be a set
of $t$ {\em mutually orthogonal Latin squares}, denoted  MOLS($n$).

A set $T\subseteq A$, where $A$ is a Latin square of order $n$, is said to be a {\em transversal}, if \begin{itemize}
\item $|T|=n$, and \item for all distinct $(r_1,c_1,x_1),(r_2,c_2,x_2)
\in T$, $r_1\neq r_2$, $c_1\neq c_2$ and $x_1\neq x_2$. \end{itemize}

Note that a Latin square has an orthogonal mate if and only if it can be partitioned into disjoint transversals.

We say that a partial Latin square $P$ on the set $N$ can be {\em embedded} in a Latin square $L$ on the set $M$ if
there exists one-to-one mappings $f_1,f_2,f_3:N\rightarrow M$ such that if $(x_1,x_2,x_3)\in P$ then
$(f_1(x_1),f_2(x_2),f_3(x_3))\in L$. A pair of orthogonal partial Latin squares $(P_1,P_2)$ is said to be embedded in a
pair
of orthogonal Latin squares $(L_1,L_2)$ if $P_1$ is embedded in $L_1$ and $P_2$ is embedded in $L_2$. A set of
orthogonal partial Latin squares $(P_1,P_2,\dots,P_n)$ is embedded in a set of mutually
orthogonal Latin squares $\{L_1,L_2,\dots,L_m\}$ where $m\geq n$ if $P_i$ is embedded in $L_i$ for all $1\leq i\leq n$.

This paper will make extensive use of Evans' embedding result, which is stated as: \begin{theorem}[\cite{Evans}]
\label{Evans} A partial Latin square of order $n$ can be embedded in a Latin
square of order $t$, for any $t \geq 2n$. \end{theorem}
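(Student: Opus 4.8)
The plan is to carry out the embedding inside a $t\times t$ array whose rows, columns and symbols are all indexed by a set $M\supseteq N$ with $|M|=t$, taking $f_1=f_2=f_3$ to be the inclusion $N\hookrightarrow M$. Thus $P$ is placed in the top-left $N\times N$ corner and uses only symbols from $N$, and the whole problem reduces to completing this partially filled corner to a Latin square of order $t$. The role of the hypothesis $t\ge 2n$ is to guarantee that the two completion steps below always succeed.

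First I would fill every empty cell of the top-left corner using only ``fresh'' symbols from $M\setminus N$, so that the corner becomes an $n\times n$ array with distinct entries in each row and in each column, that is, an $n\times n$ Latin rectangle on the alphabet $M$. To do this, build the bipartite graph $H$ whose vertex classes are the $n$ rows and the $n$ columns of the corner, with one edge joining row $i$ to column $j$ for each empty cell $(i,j)$. A row or column has at most $n$ empty cells, so $H$ has maximum degree at most $n$, and by K\"onig's bipartite edge-colouring theorem its edges can be properly coloured with at most $n$ colours. Identifying the colours with distinct symbols of $M\setminus N$ and placing, in each empty cell, the symbol of its edge, every fresh symbol then occurs at most once per row and per column (each colour class is a matching) and never coincides with an original symbol. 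This step is what consumes the budget: it needs $n$ symbols outside $N$, and these exist precisely because $|M\setminus N|=t-n\ge n$.

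Then I would complete the resulting $n\times n$ Latin rectangle $R$ to a Latin square $L$ of order $t$ by invoking Ryser's completion theorem: an $r\times s$ Latin rectangle occupying the top-left corner of a $t\times t$ grid extends to a Latin square of order $t$ if and only if every symbol occurs at least $r+s-t$ times in it. Here $r=s=n$, so the condition reads that each symbol occurs at least $2n-t$ times; since $t\ge 2n$ we have $2n-t\le 0$, so it holds vacuously and $R$ completes. As $L$ contains $P$ in its top-left corner, the inclusions $f_1,f_2,f_3$ furnish the embedding.

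I expect the main obstacle to lie in the second stage rather than in the edge-colouring. A naive column-by-column extension of $R$ out to width $t$ can become stuck --- at the last column the forced ``missing'' symbols of the various rows need not be distinct --- so it is essential to appeal to a global criterion such as Ryser's, whose occurrence condition is exactly what $t\ge 2n$ trivialises. It is also instructive to see where the threshold is sharp: if $P$ is a configuration (such as a suitably filled diagonal) that forbids reusing any symbol of $N$ outside the corner, one genuinely needs $n$ fresh symbols, so $t\ge 2n$ cannot in general be lowered.
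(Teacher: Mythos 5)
Your proof is correct: filling the empty cells of the $n\times n$ corner with at most $n$ fresh symbols from $M\setminus N$ (K\"onig's edge-colouring theorem gives this, since each colour class is a matching and $t-n\ge n$ symbols are available), and then completing the resulting $n\times n$ Latin rectangle on $t$ symbols by Ryser's theorem, whose occurrence condition $2n-t\le 0$ holds vacuously, is a complete and valid argument. Note that the paper states this theorem only as a citation to Evans and contains no proof of its own; your route is essentially the classical proof of this result.
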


The following is a similar embedding result for partial idempotent Latin squares.
\begin{theorem}[\cite{IdeEmbed}]
\label{IdeEm} A partial idempotent Latin square of order $n$ can be embedded in a idempotent Latin
square of order $t$, for any $t \geq 2n+1$. \end{theorem}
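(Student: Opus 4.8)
The plan is to imitate the proof of Evans' embedding theorem (\tref{Evans}) but to carry the extra idempotency requirement through the construction. Write the symbol (and index) set of $P$ as $\{1,2,\dots,n\}$ and fix a target order $t\geq 2n+1$, with enlarged set $M=\{1,2,\dots,t\}$. I would place $P$ in the top-left $n\times n$ sub-array of a $t\times t$ array via the identity embeddings $f_1=f_2=f_3=\mathrm{id}$, and immediately prescribe the \emph{whole} main diagonal to be idempotent, i.e.\ require the entry in cell $(i,i)$ to be $i$ for every $i\in M$. For $i\leq n$ this is already forced by $P$ (since $P$ is idempotent), while for $n<i\leq t$ it is a new constraint that I add by hand. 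The problem then becomes one of completing this partially specified array to a genuine Latin square of order $t$ without disturbing $P$ or the diagonal.

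First I would fill the off-diagonal empty cells of the top-left $n\times n$ block. Each such row and column already carries its idempotent diagonal entry together with the entries of $P$, and has at most $n-1$ empty off-diagonal cells; since there are $t-n\geq n+1$ fresh symbols available in $M\setminus\{1,\dots,n\}$, a routine application of Hall's theorem (iterated cell-by-cell, or row-by-row) lets me complete this block to an array $A$ in which every row and every column consists of distinct symbols of $M$, while the diagonal stays idempotent. The surplus of fresh symbols guarantees that the relevant systems of distinct representatives exist with room to spare.

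Next I would extend the $n$ partially-filled rows to full rows of length $t$ (an $n\times t$ Latin rectangle on $M$) and then complete this rectangle to a Latin square of order $t$ by a classical completion theorem of Ryser, which applies because after the first $n$ rows are filled each symbol of $M$ occurs exactly $n$ times, meeting the required count. The catch is that I must simultaneously realise the remaining prescribed diagonal entries, namely $i$ in cell $(i,i)$ for $n<i\leq t$, and Ryser's theorem on its own does not let one prescribe entries in the rows it adds.

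\textbf{The hard part} will be exactly this reconciliation of the completion step with the prescribed idempotent diagonal in the newly added rows and columns, and it is here that the bound $2n+1$ (rather than Evans' $2n$) enters. I would handle it in one of two ways: either invoke a refined completion theorem that permits a prescribed transversal/diagonal to be carried along, or construct the bottom-right $(t-n)\times(t-n)$ block from a known idempotent Latin square on the fresh symbols and glue the remaining off-diagonal blocks to it via Hall's theorem. The one extra row and column beyond $2n$ supplies precisely the slack needed to make these matchings feasible and to guarantee that the required idempotent block exists (an idempotent Latin square exists in every order except $2$), with the few tiny cases such as $n=1$ checked directly.
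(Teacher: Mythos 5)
This statement is not proved in the paper at all: it is quoted verbatim from the literature (Andersen, Hilton and Rodger, cited as \cite{IdeEmbed}), where its proof is a substantial piece of work that settled a conjecture of Lindner. So the real question is whether your sketch amounts to a proof, and it does not. You correctly isolate the hard part --- completing a partially filled array while carrying a prescribed idempotent diagonal through the rows and columns that the completion adds --- but both of the mechanisms you offer to close that gap fail. Your option of invoking ``a refined completion theorem that permits a prescribed transversal/diagonal to be carried along'' is circular: such a theorem is essentially the statement being proved (it is precisely what Andersen, Hilton and Rodger had to establish), and Ryser's counting condition genuinely does not extend to completions with a prescribed diagonal, which is why the problem stayed open for years after Evans' theorem.

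Your second option is worse: it is provably impossible. Suppose the bottom-right $(t-n)\times(t-n)$ block were an idempotent Latin square on the $t-n$ fresh symbols. Then every row of that block contains all $t-n$ fresh symbols, so every row of the bottom-left $(t-n)\times n$ block would have to contain all $n$ original symbols. But then each column of that bottom-left block has $t-n \geq n+1$ entries drawn from only $n$ symbols, forcing a repeat in a column of the final square --- a contradiction. (This is exactly why, in the proof of Evans' theorem and in any valid completion, the bottom-right block must mix old and new symbols.) There is also an unaddressed issue earlier in your argument: filling the off-diagonal cells of the top-left $n\times n$ block greedily is fine for Evans-type completions, but once a diagonal is prescribed there is no guarantee that an arbitrary greedy fill remains completable; the published proof has to choose this fill subject to delicate conditions. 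In short, your outline reproduces Evans' theorem plus a wish; the content of Theorem~$\ref{IdeEm}$ lies entirely inside the step you left open.
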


It is also worth noting the following well known result which is the culmination of results from a series of papers by
many authors, see for example \cite{HZ}. \begin{theorem}\label{3n} A
pair of orthogonal Latin squares of order $n$ can be embedded in a pair of orthogonal Latin squares of order $t$ if $t
\geq 3n$, with the bound of $3n$ being best possible. \end{theorem}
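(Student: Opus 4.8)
The plan is to treat the two directions separately: first the clean counting argument showing that $t \geq 3n$ is necessary (which will also yield the ``best possible'' claim), and then the constructive direction showing that $t \geq 3n$ suffices. For the lower bound I would fix any pair of orthogonal Latin squares $L_1,L_2$ of order $n$ (such pairs exist, e.g.\ for $n=3$), assume they embed in a pair of orthogonal Latin squares $M_1,M_2$ of order $t$, and after relabelling rows, columns and symbols place the copies of $L_1,L_2$ in the top-left $n\times n$ corner, using the symbols $\{1,\ldots,n\}$ in both $M_1$ and $M_2$. Since each of the first $n$ rows of $M_1$ already contains every symbol $1,\ldots,n$ inside the corner, those symbols never appear in the top-right $n\times(t-n)$ block of $M_1$, and by the same argument applied to columns they never appear in the bottom-left block; likewise for $M_2$. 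Hence the remaining $t-n$ occurrences of each small symbol, in each of $M_1$ and $M_2$, are forced into the bottom-right $(t-n)\times(t-n)$ block $B$. I would then invoke orthogonality: because $L_1,L_2$ already realise all $n^2$ ordered pairs from $\{1,\ldots,n\}\times\{1,\ldots,n\}$ inside the corner, no cell of $B$ can carry small symbols in both $M_1$ and $M_2$. Thus the $n(t-n)$ cells of $B$ with a small $M_1$-entry are disjoint from the $n(t-n)$ cells with a small $M_2$-entry, giving $2n(t-n)\leq(t-n)^2$ and therefore $t\geq 3n$. As this applies to \emph{every} embeddable pair, it establishes necessity and shows the bound cannot be lowered.

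For sufficiency I would reduce the problem to the construction of a suitable ``incomplete'' orthogonal pair. Specifically, it is enough to build, for each $t\geq 3n$, a pair of orthogonal Latin squares of order $t$ sharing a common empty $n\times n$ subarray (a hole) on $n$ common symbols, arranged so that filling the hole with an arbitrary pair of orthogonal Latin squares of order $n$ produces a genuine orthogonal pair of order $t$. Given such an incomplete pair the desired embedding is immediate: one simply drops the prescribed pair $L_1,L_2$ into the hole. The entire difficulty is thereby concentrated in constructing these incomplete orthogonal squares for all $t\geq 3n$.

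To produce the incomplete pairs I would assemble them from standard building blocks: pairs of MOLS arising from finite fields and their Kronecker (MacNeish) products, truncated transversal designs, and frame or filling-in-holes constructions, using these to cover all sufficiently large $t$ in each residue class and then disposing of the small and exceptional orders individually. The main obstacle is precisely this uniform coverage down to the exact threshold $t=3n$: the algebraic product constructions naturally deliver only composite orders, or orders that exceed a hole multiple, so forcing the spectrum all the way to $t=3n$ and closing every gap for small $n$ demands ad hoc designs and delicate case analysis. This is exactly why the complete result accumulated over a sequence of papers rather than following from one clean construction, and for the purposes of this paper I would invoke it as the stated known theorem.
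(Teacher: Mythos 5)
The first thing to say is that the paper never proves this theorem: it is quoted as a known result, ``the culmination of results from a series of papers by many authors,'' with Heinrich--Zhu cited as a representative reference. So there is no internal proof to compare against, and the comparison has to be made against that citation. Your first half, the counting argument, is correct and is the standard proof that the bound is best possible. With both copies aligned in the top-left $n\times n$ corner on symbols $\{1,\dots,n\}$ (note that this alignment --- common row and column injections for the two big squares --- is an implicit assumption; the paper's definition of embedding a pair is loose on this point, but yours is the standard reading, and the one under which the theorem is true), each small symbol of $M_i$ is excluded from the two off-corner blocks, hence occurs exactly $t-n$ times in the bottom-right block $B$. Since the corner already exhausts all $n^2$ ordered pairs of small symbols, orthogonality of $M_1,M_2$ forces the $n(t-n)$ cells of $B$ carrying small $M_1$-entries to be disjoint from the $n(t-n)$ cells carrying small $M_2$-entries, whence $2n(t-n)\le (t-n)^2$, i.e.\ $t\ge 3n$ for any proper embedding ($t>n$). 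This correctly establishes the ``best possible'' clause.

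Your second half, by contrast, is not a proof but a correct identification of how the literature proves sufficiency: reduce to incomplete orthogonal pairs of order $t$ with a common $n\times n$ hole that can be filled by an arbitrary orthogonal pair of order $n$, then build these from field-based MOLS, products, truncated transversal designs and filling-in-holes arguments. Taken literally, ending with ``invoke the stated known theorem'' is circular, since the known theorem \emph{is} the statement at issue; but this deferral is precisely what the paper itself does for the entire theorem. Net assessment: you supply a genuine proof of the optimality claim, which the paper does not record, and you cite the existence claim, which is also all the paper does; as a self-contained argument the existence half remains a gap, but it is a gap shared, deliberately, by the paper. One pedantic caveat: as stated, the theorem has a degenerate exception at $n=1$, $t=6$ (no orthogonal pair of order $6$ exists), and the literature's incomplete-squares result correspondingly excludes the parameter pair $(6;1)$; this affects your reduction's claim that suitable incomplete pairs exist for \emph{every} $t\ge 3n$, though it is immaterial for $n\ge 3$.
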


\section{Embedding a PLS in a set of MOLS} We begin by assuming that there exists a set of $t$ MOLS($n$) and show that
any Latin square $L$, of order $n$, can be embedded in a Latin square
${\cal B}$, of order $n^2$, with the additional property that ${\cal B}$ has $t$ mutually orthogonal mates. This result
will then allow us to show that any PLS($s$) where $s\leq n/2$ can be
embedding in a Latin square ${\cal B}$ of order $n^2$ such that ${\cal B}$ has $t$ mutually orthogonal mates. Thus
this result, and the associated construction, allows us to generalise Jenkin's
result which is stated as:

\begin{theorem}[\cite{Jenkins}] Let $L$ be a Latin square of order $n$ with $n \geq 3$ and $n \neq 6$. Then $L$ can be
embedded in a Latin square of order $n^2$ which has an orthogonal mate.
\end{theorem}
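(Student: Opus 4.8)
The plan is to reduce the statement to a transversal problem and then resolve it with a blow-up of $L$ steered by a pair of orthogonal Latin squares of order $n$. Since a Latin square has an orthogonal mate exactly when its cells partition into disjoint transversals, the goal is a Latin square $\mathcal{B}$ of order $n^2$ that both contains a copy of $L$ and admits a partition into $n^2$ transversals. I would index the rows, columns and symbols of $\mathcal{B}$ by $N\times N$ and view $\mathcal{B}$ as an $n\times n$ array of $n\times n$ blocks, with the block in position $(i,j)$ governed by the entry $L(i,j)$.

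For the construction, fix any pair of MOLS$(n)$, say $H$ with orthogonal mate $H'$; such a pair exists if and only if $n\geq 3$ and $n\neq 6$, and this is the sole point at which the hypothesis on $n$ is used. Put $\mathcal{B}((i,a),(j,b))=(L(i,j),H(a,b))$. Checking one row-band and one column-band at a time shows $\mathcal{B}$ is a Latin square of order $n^2$, using only that $L$ and $H$ are Latin. The embedding is then immediate: fixing $x_0\in N$ and writing $t_0=H(x_0,x_0)$, the maps $f_1=f_2\colon x\mapsto(x,x_0)$ and $f_3\colon z\mapsto(z,t_0)$ are injective, and $\mathcal{B}((x,x_0),(y,x_0))=(L(x,y),t_0)$, so the copy of $L$ sits faithfully on this diagonal slice.

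The substance of the proof is the orthogonal mate of $\mathcal{B}$. The difficulty is that $L$ is \emph{arbitrary} and may possess no transversal at all, so the transversals of $\mathcal{B}$ cannot be inherited from a mate of $L$; every coincidence among the first coordinates that $L$ forces must instead be absorbed inside the order-$n$ second coordinate. This is precisely where $H'$ is indispensable: it provides $n$ pairwise disjoint transversals $\Theta_0,\dots,\Theta_{n-1}$ of $H$, and I would build the $n^2$ transversals of $\mathcal{B}$ by threading the $n$ permutations that record the positions of the symbols of $L$ through the $\Theta_k$, using the orthogonality of $H$ and $H'$ to forbid symbol repetitions. Equivalently, I would exhibit a mate $\mathcal{C}$ whose two coordinates genuinely mix $(i,a,j,b)$; note that the first coordinate of $\mathcal{C}$ cannot depend on $(i,j)$ alone, for a mate with such a product super-structure would in effect require $L$ itself to have an orthogonal mate, which fails in general. (For the same reason one cannot simply invoke \tref{3n}, which presupposes that $L$ already lies in an orthogonal pair.)

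I expect this construction and verification of $\mathcal{C}$ to be the main obstacle: one must arrange the mixing so that, simultaneously and for \emph{every} $L$, (i) $\mathcal{C}$ is a Latin square (each band and each block a permutation) and (ii) each symbol pair of $(\mathcal{B},\mathcal{C})$ is realised by a unique cell, which amounts to checking that the resulting system of $n^2$ transversals is disjoint and covering, i.e.\ resolvable. The freedom needed for (ii) comes entirely from the $n$ disjoint transversals of $H$, so a single square of order $n$ does not suffice and a genuine orthogonal pair is essential. Finally, feeding a larger family of MOLS$(n)$ into the same scheme should yield correspondingly many mutually orthogonal mates of $\mathcal{B}$, the stronger conclusion the paper develops; for Jenkins' statement a single orthogonal pair of order $n$ is enough, and its existence is exactly the hypothesis $n\geq 3$ and $n\neq 6$.
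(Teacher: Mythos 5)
Your construction of $\mathcal{B}$ and the embedding of $L$ into it are correct, but the proof stops exactly where the theorem begins: the orthogonal mate of $\mathcal{B}$ is never produced. Everything after ``The substance of the proof is the orthogonal mate'' is a plan rather than an argument --- you say you \emph{would} build the $n^2$ transversals by threading permutations through the transversals $\Theta_0,\dots,\Theta_{n-1}$ of $H$, and you yourself flag ``this construction and verification of $\mathcal{C}$'' as the expected main obstacle. Since $L$ may be a bachelor square (Wanless--Webb \cite{Wanless}), the existence of any decomposition of $L\times H$ into disjoint transversals is precisely the nontrivial content of Jenkins' theorem; your (correct) observation that a mate with product super-structure cannot exist narrows the search but exhibits nothing, and none of the required properties of the threaded system (that each thread is a transversal, that the threads are disjoint and cover $\mathcal{B}$) is verified or even precisely formulated. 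As it stands, the proposal is a restatement of the problem in transversal language plus an unproven guess about how MOLS of order $n$ could resolve it.

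For comparison, the paper's route to this statement (Theorem~$\ref{one-to-many}$) closes exactly this gap with an explicit formula rather than a threading argument: it takes ${\cal B}=\{((p,r),(q,c),(F_1(p,q),L(F_1(p,r),c)))\}$ and the mate ${\cal X}_k=\{((p,r),(q,c),(F_k(F_1(p,r),q),F_k(F_1(p,q),c)))\}$, and the check that these are orthogonal Latin squares is a few lines of cancellation. Your $\mathcal{B}$ is in fact isotopic to the paper's (take $F_1=H$, swap the two coordinates of rows, columns and symbols, and relabel rows by $(p,r)\mapsto(p,F_1(p,r))$), so the mate you are looking for does exist --- but existence is what had to be proved. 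Note also that the paper's verification of ${\cal X}_k\perp{\cal B}$ uses only the Latin property of $F_k$, $F_1$ and $L$, never orthogonality among the $F_i$; taking $k=1$ shows that a \emph{single, arbitrary} Latin square of order $n$ suffices. This refutes your closing claims that ``a genuine orthogonal pair is essential'' and that the hypotheses $n\geq 3$, $n\neq 6$ are exactly what is needed: those hypotheses are an artifact of Jenkins' original approach (and of yours) via a pair of MOLS($n$), and the paper's construction dispenses with them entirely.
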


\begin{theorem}\label{one-to-many} Let $F_1=[F_1(r,c)],\dots, F_t=[F_t(r,c)]$ be $t$ mutually orthogonal Latin squares
of order $n$ indexed by $[n]=\{0,1,\ldots,n-1\}$. Let $L=[L(r,c)]$ be a Latin square of order $n$, also indexed by
$[n]$. Then the arrays ${\cal B}$ and ${\cal X}_k,$ for $1\leq k\leq t$,  form a set of $t+1$ mutually orthogonal Latin
squares of order $n^2$ where
\begin{align*} {\cal X}_k&=\{((p,r),(q,c),(F_k(F_1(p,r),q),F_k(F_1(p,q),c)))\mid 0\leq p,q,r,c\leq n-1\},\\ {\cal
B}&=\{((p,r),(q,c),(F_1(p,q),L(F_1(p,r),c)))\mid 0\leq p,q,r,c\leq n-1\}.
\end{align*}
\end{theorem}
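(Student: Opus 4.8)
The plan is to verify three things in turn: that each ${\cal X}_k$ is a Latin square, that ${\cal B}$ is a Latin square, and that the $t+1$ arrays are pairwise orthogonal. Throughout, rows are indexed by pairs $(p,r)$ and columns by pairs $(q,c)$ with all coordinates in $[n]$, so each array is $n^2\times n^2$ and each symbol lies in $[n]\times[n]$, a set of size $n^2$; the counts therefore match. The single recurring tool is that every row and every column of a Latin square is a permutation of $[n]$, combined with the fact that a map between finite sets of equal cardinality is bijective as soon as it is injective. So in each case it suffices to show that the relevant indices can be recovered from the data available.

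For the Latin square property of ${\cal X}_k$ I would argue that each symbol appears once per row and once per column by ``peeling off'' coordinates. Fixing a row $(p,r)$ and writing $a=F_1(p,r)$, the first symbol-coordinate $F_k(a,q)$ determines $q$ (since $q\mapsto F_k(a,q)$ is a permutation), after which $F_1(p,q)$ is determined and the second coordinate $F_k(F_1(p,q),c)$ determines $c$; hence the row map $(q,c)\mapsto$ symbol is injective. Fixing a column $(q,c)$, I would read the data in the opposite order: the second coordinate $F_k(F_1(p,q),c)$ recovers $F_1(p,q)$ (column $c$ of $F_k$ is a permutation) and hence $p$ (column $q$ of $F_1$ is a permutation), and then the first coordinate recovers $F_1(p,r)$ and hence $r$ (row $p$ of $F_1$). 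The same bookkeeping handles ${\cal B}$: within a fixed row the coordinate $F_1(p,q)$ gives $q$ and $L(F_1(p,r),c)$ gives $c$; within a fixed column $F_1(p,q)$ gives $p$, after which $r\mapsto L(F_1(p,r),c)$ is a composition of two permutations and so recovers $r$.

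For orthogonality I would use the criterion that two Latin squares of order $N$ are orthogonal precisely when the map sending a cell to its pair of symbols is injective. For ${\cal X}_k$ and ${\cal X}_l$ with $k\neq l$, the decisive observation is that the two first coordinates, $F_k(F_1(p,r),q)$ and $F_l(F_1(p,r),q)$, are the entries of $F_k$ and $F_l$ in the common cell $(F_1(p,r),q)$, so orthogonality of $F_k$ and $F_l$ recovers both $F_1(p,r)$ and $q$; likewise the two second coordinates recover $F_1(p,q)$ and $c$; and then $q$ together with $F_1(p,q)$ recovers $p$, after which $p$ together with $F_1(p,r)$ recovers $r$. For ${\cal X}_k$ and ${\cal B}$ the first coordinate of ${\cal B}$ hands us $F_1(p,q)$ outright; combined with the second coordinate $F_k(F_1(p,q),c)$ of ${\cal X}_k$ this yields $c$, then the second coordinate $L(F_1(p,r),c)$ of ${\cal B}$ yields $F_1(p,r)$, the first coordinate of ${\cal X}_k$ yields $q$, and finally $F_1$ recovers $p$ and then $r$ as before.

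The steps themselves are elementary permutation arguments; the main obstacle is purely organisational, namely choosing, in each of the several cases, the correct order in which to extract $p,q,r,c$ so that every value needed is already known when it is used. The one genuine idea, as opposed to bookkeeping, is recognising in the ${\cal X}_k$-versus-${\cal X}_l$ case that the paired first coordinates and the paired second coordinates are each an instance of the orthogonality of $F_k$ and $F_l$, evaluated at the ``cells'' $(F_1(p,r),q)$ and $(F_1(p,q),c)$ respectively; once that is seen, the mutual orthogonality of the $F_i$ does all the work, and the remaining recoveries of $p$ and $r$ follow from the permutation structure of $F_1$ alone.
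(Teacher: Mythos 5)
Your proposal is correct and follows essentially the same route as the paper's proof: the paper verifies the Latin property and pairwise orthogonality by assuming two cells collide and deriving a contradiction, which is exactly your coordinate-recovery (injectivity) argument stated in contrapositive form, with the same key steps --- orthogonality of $F_k$ and $F_\ell$ applied at the cells $(F_1(p,r),q)$ and $(F_1(p,q),c)$, followed by recovery of $p$ and $r$ via $F_1$ --- and even the same order of extraction ($c$, then $F_1(p,r)$, then $q$, then $p$, then $r$) in the ${\cal X}_k$-versus-${\cal B}$ case. No gaps; your explicit treatment of the column case, which the paper dismisses as ``similar,'' is a minor bonus.
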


\begin{proof} For completeness we begin by showing these arrays are Latin squares, then that ${\cal X}_k$, $1\leq k\leq
t$, are mutually orthogonal and finally that for each $k$, ${\cal X}_k$
and ${\cal B}$ are orthogonal.

Assume that one of ${\cal X}_k$, $1\leq k\leq t$ or ${\cal B}$ is not a Latin square. Then \begin{itemize} \item for
some $(p,r)$, there exists $(q,c)$ and $(q^\prime,c^\prime)$, with $(q,c)\ne (q^\prime,c^\prime)$, such that \\
$(F_k(F_1(p,r),q),F_k(F_1(p,q),c))=(F_k(F_1(p,r),q^\prime),F_k(F_1(p,q^\prime),c^\prime))$, or \\
$(F_1(p,q),L(F_1(p,r),c))=(F_1(p,q^\prime),L(F_1(p,r),c^\prime))$; \end{itemize}
     or
\begin{itemize} \item for some $(q,c)$, there exists $(p,r)$ and $(p^\prime,r^\prime)$, with $(p,r)\ne
(p^\prime,r^\prime)$, such that \\
$(F_k(F_1(p,r),q),F_k(F_1(p,q),c))=(F_k(F_1(p^\prime,r^\prime),q),F_k(F_1(p^\prime,q),c))$, or \\
$(F_1(p,q),L(F_1(p,r),c))=(F_1(p^\prime,q),L(F_1(p^\prime,r^\prime),c))$. \end{itemize}

The first case  implies \begin{align*} F_k(F_1(p,r),q)=F_k(F_1(p,r),q^\prime)\mbox{ and }
F_k(F_1(p,q),c)=F_k(F_1(p,q^\prime),c^\prime). \end{align*} Thus we may deduce that $q=q^\prime $ and
consequently $c=c^\prime$, a contradiction. All the other cases follow in a similar manner and hence  ${\cal X}_k$,
$1\leq k\leq t$, and ${\cal B}$ are Latin squares of order $n^2$.

Next assume that ${\cal X}_k$ and ${\cal X}_{\ell}$, for $k\neq \ell$ are not orthogonal, and so there exist distinct
cells $((p,r), (q,c))$ and $((p^\prime,r^\prime), (q^\prime,c^\prime))$
such that \begin{align*}
(F_k(F_1(p,r),q),F_k(F_1(p,q),c))&=(F_k(F_1(p^\prime,r^\prime),q^\prime),F_k(F_1(p^\prime,q^\prime),c^\prime))\mbox{
and}\\
(F_{\ell}(F_1(p,r),q),F_{\ell}(F_1(p,q),c))&=(F_\ell(F_1(p^\prime,r^\prime),q^\prime),F_{\ell}(F_1(p^\prime,q^\prime),
c^\prime)). \end{align*} Then \begin{align}
F_k(F_1(p,r),q)&=F_k(F_1(p^\prime,r^\prime),q^\prime),\label{gen-eq1}\\
F_k(F_1(p,q),c)&=F_k(F_1(p^\prime,q^\prime),c^\prime),\label{gen-eq2}\\
F_\ell(F_1(p,r),q)&=F_\ell(F_1(p^\prime,r^\prime),q^\prime)\label{gen-eq3},\\
F_{\ell}(F_1(p,q),c)&=F_{\ell}(F_1(p^\prime,q^\prime),c^\prime).\label{gen-eq4} \end{align}

But $F_k$ and $F_\ell$ are orthogonal Latin squares, hence Equations \eqref{gen-eq1} and \eqref{gen-eq3}  imply
$F_1(p,r)=F_1(p^\prime,r^\prime)$ and $q=q^\prime$, while  Equations
\eqref{gen-eq2} and \eqref{gen-eq4} imply $F_1(p,q)=F_1(p^\prime,q^\prime)$ and $c=c^\prime$. Thus we may deduce that
$p=p^\prime$ and hence $r=r^\prime$. So $((p,r),
(q,c))=((p^\prime,r^\prime), (q^\prime,c^\prime))$, a contradiction. Hence $\{{\cal X}_k \mid 1\leq k \leq t\}$, is a
set of $t$ MOLS($n^2$).

Finally  assume that for some $k\in\{1,\dots,t\}$, ${\cal X}_k$ and  ${\cal B}$ are not orthogonal. Thus there exist
distinct cells $((p,r), (q,c))$ and $((p^\prime,r^\prime),
(q^\prime,c^\prime))$ such that \begin{align*}
(F_k(F_1(p,r),q),F_k(F_1(p,q),c))&=(F_k(F_1(p^\prime,r^\prime),q^\prime),F_k(F_1(p^\prime,q^\prime),c^\prime))\mbox{
and}\\
(F_1(p,q),L(F_1(p,r),c))&=(F_1(p^\prime,q^\prime),L(F_1(p^\prime,r^\prime),c^\prime)). \end{align*} Then \begin{align}
F_k(F_1(p,r),q)&=F_k(F_1(p^\prime,r^\prime),q^\prime),\label{gen-eq5}\\
F_k(F_1(p,q),c)&=F_k(F_1(p^\prime,q^\prime),c^\prime),\label{gen-eq6}\\
F_1(p,q)&=F_1(p^\prime,q^\prime)\label{gen-eq7},\\ L(F_1(p,r),c)&=L(F_1(p^\prime,r^\prime),c^\prime).\label{gen-eq8}
\end{align} Since $F_k$ is a Latin square, Equation \eqref{gen-eq7}  substituted into Equation \eqref{gen-eq6} gives
$c=c^\prime$. Then Equation \eqref{gen-eq8} gives
$F_1(p,r)=F_1(p^\prime,r^\prime)$ and when substituted into Equation \eqref{gen-eq5} gives $q=q^\prime$. Returning to
Equation \eqref{gen-eq7} we get $p=p^\prime$ and  consequently
$r=r^\prime$. So $((p,r), (q,c))=((p^\prime,r^\prime), (q^\prime,c^\prime))$, a contradiction. Hence for all $1\leq
k\leq t$, ${\cal X}_k$ is orthogonal to ${\cal B}$, and the result follows.
\end{proof}

\begin{corollary}
 Let $P$ be a partial Latin square of order $n$, $n\geq 3$. Then $P$ can be embedded in a Latin square ${\cal B}$ of
order at most $16n^2$,  where  ${\cal B}$ has at least $2n$ mutually orthogonal mates. Furthermore if $P$ is
idempotent then ${\cal B}$ can be constructed  to be idempotent.

\end{corollary}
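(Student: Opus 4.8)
The plan is to realise the corollary as a direct application of \tref{one-to-many}, after padding $P$ out to a full Latin square of a well-chosen order. First I would use Bertrand's postulate to fix a prime $m$ with $2n<m<4n$; such a prime exists for every $n\ge 1$ and gives at once $m\ge 2n+1$ and $m^2<16n^2$. Being prime, $m$ is a prime power, so there are $m-1$ mutually orthogonal Latin squares of order $m$, and since $m-1\ge 2n$ I may select $2n$ of them to serve as $F_1,\dots,F_{2n}$ in \tref{one-to-many}. Next, as $m\ge 2n$, Evans' theorem (\tref{Evans}) embeds $P$ in a Latin square $L$ of order $m$, via injections $g_1,g_2,g_3\colon N\to\{0,\dots,m-1\}$. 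Feeding these $F_k$ and this $L$ into \tref{one-to-many} produces a Latin square $\mathcal B$ of order $m^2\le 16n^2$ possessing the $2n$ mutually orthogonal mates $\mathcal X_1,\dots,\mathcal X_{2n}$, which already secures both the order bound and the required number of orthogonal mates.

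It then remains to exhibit $P$ inside $\mathcal B$. The key step is to restrict $\mathcal B$ to the rows indexed by $\{(0,r)\}$ and the columns indexed by $\{(0,c)\}$: by the defining formula the entry in cell $((0,r),(0,c))$ is $(F_1(0,0),L(F_1(0,r),c))$. Since $r\mapsto F_1(0,r)$ is a permutation of $\{0,\dots,m-1\}$ and the first coordinate is the constant $F_1(0,0)$, this $m\times m$ sub-array is an isotopic copy of $L$, with row $a$ of $L$ corresponding to $(0,F_1(0,\cdot)^{-1}(a))$, column $c$ to $(0,c)$, and symbol $s$ to $(F_1(0,0),s)$. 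Composing this isotopism with the Evans embedding $g_1,g_2,g_3$ yields the three injections from $N$ into the index and symbol sets of $\mathcal B$ demanded by the definition of embedding, which completes the non-idempotent statement.

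For the idempotent refinement I would rerun the scheme with idempotent ingredients: embed the idempotent $P$ in an idempotent Latin square $L$ of order $m$ by \tref{IdeEm} (valid since $m\ge 2n+1$) through a single injection $g$ carrying the diagonal of $P$ onto that of $L$, and take the $F_k$ to be idempotent MOLS of order $m$. A direct computation shows that the diagonal cell $((p,r),(p,r))$ of $\mathcal B$ carries $(F_1(p,p),L(F_1(p,r),r))$, so $\mathcal B$ is not literally idempotent; however, since $\mathcal B$ has orthogonal mates it splits into disjoint transversals, hence is isotopic to an idempotent square, and any such isotopy (a relabelling of the symbols of $\mathcal B$ together with a common column permutation applied to all $2n+1$ squares) preserves both the orthogonal mates and the presence of an embedded copy of $P$. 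The main obstacle is to make this relabelling respect the diagonal, that is, to guarantee that the embedded idempotent cells of $P$ land on the main diagonal of the resulting idempotent $\mathcal B$; this forces the main diagonal of $\mathcal B$ to be a transversal and the map $(p,r)\mapsto(F_1(p,p),L(F_1(p,r),r))$ to be a bijection, which is exactly where a careful choice of the idempotent MOLS $F_1,\dots,F_{2n}$ (and the verification that $2n$ such squares of order $m\le 4n$ exist) is needed. I expect this alignment, rather than the counting or the order bound, to be the delicate part of the argument.
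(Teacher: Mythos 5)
The non-idempotent half of your proposal is correct and is essentially the paper's own argument. The only differences are cosmetic: you take a Bertrand prime $m$ with $2n<m<4n$ where the paper takes the power of two $m=2^k$ with $2n< m\leq 4n$, and you exhibit $P$ inside the block $p=q=0$ as an \emph{isotopic} copy of $L$, where the paper instead normalises the MOLS to standard form ($F_1(0,r)=r$) so that the copy is literal. Both variants satisfy the paper's definition of embedding, so the order bound, the $2n$ mates, and the embedding of $P$ are all in place.

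The idempotent refinement, however, has a genuine gap --- the very one you flag at the end, and the direction in which you propose to close it cannot work. Your plan needs the main diagonal of $\mathcal{B}$ to be a transversal as built, i.e.\ (taking $F_1$ idempotent, so $F_1(p,p)=p$) that for every fixed $p$ the map $r\mapsto L(F_1(p,r),r)$ is injective. For fixed $p$ the cells $\{(F_1(p,r),r)\mid 0\leq r\leq m-1\}$ meet each row and each column of $L$ exactly once, and as $p$ varies these $m$ diagonals partition the cells of $L$; so your condition says precisely that the rows of $F_1$ cut $L$ into $m$ disjoint transversals, equivalently that $L$ is orthogonal to the Latin square $M$ defined by $M(F_1(p,r),r)=p$. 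But $L$ is whatever idempotent square \tref{IdeEm} hands you: you have no control over its internal structure, and it may even be a bachelor square, in which case \emph{no} choice of $F_1$ achieves this; making $F_2,\dots,F_{2n}$ idempotent is irrelevant, since only $F_1$ enters the diagonal of $\mathcal{B}$. The missing idea (the paper's) is not to demand that the diagonal of $\mathcal{B}$ be a transversal from the start, but to \emph{create} one: within each block, permute the rows $(p,0),\dots,(p,m-1)$ among themselves so that the transversal of $L$ given by its idempotent diagonal lands on the main diagonal of the subsquare $\mathcal{S}_p$; since the entries of $\mathcal{S}_p$ all have first coordinate $F_1(p,p)$, and these are distinct across blocks (for which it suffices to place a transversal of $F_1$ through $(0,0,0)$ on its diagonal --- no idempotent MOLS needed), the whole main diagonal of $\mathcal{B}$ becomes a transversal. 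Then rename symbols to make $\mathcal{B}$ idempotent. Block $p=0$ requires neither permutation nor renaming, so the copy of $P$ survives with its diagonal cells on the main diagonal --- which is the content that makes the ``furthermore'' clause non-vacuous; your fallback ``$\mathcal{B}$ is isotopic to an idempotent square'' observation would prove only the vacuous reading, and you were right not to rely on it. Finally, applying the same row permutations and renaming to every $\mathcal{X}_k$ preserves the $2n$ orthogonal mates.
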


\begin{proof} We will first embed $P$ in a Latin square $L$ of order $m$ where $2^k=m> 2n \geq 2^{k-1}$ which is always
possible given Evans result, Theorem \ref{Evans}. We can also assume that $L$ is indexed by
$[m]=\{0,1,\ldots,m-1\}$. As is well known, since $m$ is a prime power, there exists a set of $m-1$ mutually orthogonal
Latin squares $\{F_1,F_2,\ldots,F_{m-1}\}$ of order $m$, also indexed by $[m]$ and in standard form (that is,
$F_i(0,j)=j$ for each $1\leq i\leq m-1$ and $0\leq j\leq m-1$). Then $\{{\cal X}_1, {\cal X}_2,\ldots,{\cal
X}_{m-1},{\cal B}\}$ forms a set of size $m$ of mutually orthogonal Latin squares of order $m^2$.

Observe that since $F_1(0,r)=r$, the construction places a copy of $P$ in the sub-array defined by $p=0$ and $q=0$ and
so $P$ has been embedded in $\cal B$ which has  been shown to have $m-1$ mutually orthogonal mates.

As $2^k=m> 2n \geq 2^{k-1}$ we have  $2^{k+1}> 4n \geq 2^k=m$, so $16n^2\geq m^2$. Hence every partial Latin square of order
$n$ embeds in a Latin square of order at most $16n^2$  for which there exists at least $2n$ mutually orthogonal mates.

Now, one can make sure ${\cal B}$ is idempotent if $P$ is idempotent. When embedding $P$, ensure that $L$ is
idempotent, which can be guaranteed by Theorem \ref{IdeEm} because $m\ge 2n+1$. Note that $F_1$ is in
standard form and is decomposable into transversals. So there exists a transversal of $F_1$ involving the element
$(0,0,0)$. Without loss of generality one can assume that this transversal is
on the main diagonal of $F_1$. So $F_1(p,p)\neq F_1(p',p')$ for $p\neq p'$. Hence, if $p\ne p'$, the cells
$((p,r),(p,r))$ and $((p',r),(p',r))$ of $\mathcal{B}$ contain elements with
different first coordinates. The second coordinate in cell $((p,r),(p,r))$ of $\mathcal{B}$ is $L(F_1(p,r),r)$. So for
each fixed $p$, these second coordinates form a row-permuted copy of
$L$.

Now consider the subsquare $\mathcal{S}_p$ of $\mathcal{B}$ formed by the cells $((p,r),(p,r'))$ for $0\le r,r'\le
m-1$. The entries in $\mathcal{S}_p$ all have the same first coordinate
$F_1(p,p)$, and the second coordinates form a row-permuted copy of $L$. Since $L$ is idempotent, $L$ has a transversal
and by permuting the rows $\{(p,0),(p,1),\ldots,(p,m-1)\}$ of $\cal B$  we
can arrange for this transversal of $\mathcal{S}_p$ to lie on the main diagonal of $\mathcal{B}$. This can be done
independently for each $p=0,1,\ldots,m-1$, and the result is a transversal
of $\mathcal{B}$ on its main diagonal.  By suitable renaming of the elements of $\mathcal{B}$ we can then arrange for
$\mathcal{B}$ to be idempotent. In the case $p=0$, the original entry in
the cell $(0,r),(0,r')$ of $\mathcal{B}$ is $(0,L(r,r'))$, so no permuting of the rows of $\mathcal{S}_0$ or renaming
of elements $(0,x)$ is required (strictly speaking we apply the identity
permutation and the identity renaming here). Hence $\mathcal{B}$ retains a copy of $L$ in the subsquare
$\mathcal{S}_0$. Finally, to complete the proof, we apply the same permutation of the rows and renaming of elements to
each $\mathcal{X}_k$ as were applied to $\mathcal{B}$. \end{proof}

Note that one can increase the number of mutually orthogonal Latin squares that are orthogonal to ${\cal B}$ as much as
one likes by increasing the order of the embedding Latin square $L$ to
guarantee the existence of a larger number of mutually orthogonal Latin squares of the same order as $L$.

\begin{corollary}
 Let $L$ be a Latin square of order $n$ with $n\geq 7$ and $n\neq 10, 18\ or\ 22$. Then $L$ can be embedded in a Latin
square $\cal
B$ of order $n^2$ where ${\cal B}$ has at least four mutually orthogonal mates. \end{corollary}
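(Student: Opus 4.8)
The plan is to reduce the statement to a single application of \tref{one-to-many} with $t=4$, mirroring the proof of the preceding corollary but keeping the order at $n^2$. To obtain four mutually orthogonal mates of $\mathcal{B}$, the construction needs four mutually orthogonal Latin squares $F_1,F_2,F_3,F_4$ of order $n$ as input; the Latin square $L$ is arbitrary and plays no role beyond being the square that gets embedded. So the whole corollary rests on one external fact: the existence of $4$ MOLS of order $n$ for the stated values of $n$.

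First I would invoke the known lower bounds on $N(n)$, the maximum number of MOLS of order $n$. For every prime power $n\geq 7$ one has $N(n)=n-1\geq 6$, so four MOLS certainly exist. For the remaining composite non-prime-power orders (such as $n=14,15,20,21,24$, and so on), the existence of four MOLS is supplied by the standard constructions collected in the MOLS tables, see \cite{Handbook}. The values $n=10,18,22$ are precisely the small orders with $n\geq 7$ for which this fails: there $N(10)\geq 2$ and $N(18),N(22)\geq 3$ are the best available, all below $4$, so the construction could deliver only $N(n)<4$ mates, which is exactly why these orders must be excluded. Having fixed $F_1,F_2,F_3,F_4$, I would relabel the symbols of $F_1$ by the inverse of its first row; this permutation preserves orthogonality of $F_1$ with each $F_k$, so it yields four MOLS with $F_1$ in standard form, i.e.\ $F_1(0,r)=r$.

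Then I would apply \tref{one-to-many} to $F_1,\dots,F_4$ and $L$ to obtain a set $\{\mathcal{X}_1,\mathcal{X}_2,\mathcal{X}_3,\mathcal{X}_4,\mathcal{B}\}$ of five mutually orthogonal Latin squares of order $n^2$. Since $F_1(0,r)=r$, the observation used in the preceding corollary applies verbatim: the sub-array of $\mathcal{B}$ defined by $p=q=0$ is the copy $\{((0,r),(0,c),(0,L(r,c)))\}$, so $L$ is embedded in $\mathcal{B}$, while $\mathcal{X}_1,\dots,\mathcal{X}_4$ are four mutually orthogonal mates of $\mathcal{B}$, as required. The genuine content, and the only real obstacle, is the combinatorial input of the second paragraph, namely certifying that $4$ MOLS of order $n$ exist for every $n\geq 7$ outside $\{10,18,22\}$; everything downstream is a direct invocation of \tref{one-to-many} together with the embedding remark already established.
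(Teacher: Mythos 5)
Your proof is correct and follows essentially the same route as the paper: cite the known existence of four MOLS of order $n$ for all $n\geq 7$ with $n\neq 10,18,22$, and feed them into Theorem~\ref{one-to-many} to produce $\mathcal{B}$ together with the four mates $\mathcal{X}_1,\dots,\mathcal{X}_4$ (the standardization of $F_1$ and the embedding observation you spell out are implicit in the paper's appeal to the preceding corollary). The only slight inaccuracy is bibliographic rather than mathematical: the case $n=14$ is not covered by the Handbook tables alone but requires Todorov's result \cite{Todorov}, which is why the paper cites both sources.
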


\begin{proof}
 We know by \cite{Handbook} (Section III-3-4) and \cite{Todorov} that if $n\geq 7$ and $n\neq 10,18\ or\ 22$, there
exist four mutually orthogonal Latin squares of order $n$. Use these Latin squares to form $\cal B$, ${\cal X}_1$,
${\cal X}_2$, ${\cal X}_3$ and ${\cal X}_4$. \end{proof}

A {\em bachelor Latin square} is a Latin square which has no orthogonal mate; equivalently, it is a Latin square with
no decomposition into disjoint transversals. A {\em confirmed bachelor
Latin square} is a Latin square that contains an entry through which no transversal passes.

Wanless and Webb \cite{Wanless} have established the existence of confirmed bachelor Latin squares for all possible
orders $n$, $n\notin \{1, 3\}$. So it is interesting to note that the above
results (including Jenkins result) established that when one essentially ``squares'' a bachelor, it is possible to find
an orthogonal mate.

\section{Embedding a pair of OPLS in a set of MOLS} In this section we make use of the embedding result of Donovan and
Yaz{\i}c{\i}, \cite{Donovan}, to show that a pair of orthogonal partial
Latin squares can be embedded in pair of orthogonal Latin square which have many orthogonal mates.

\begin{theorem}[\cite{Donovan}] \label{D&Y} Let $P$ and $Q$ be a pair of orthogonal partial Latin squares of order $n$.
Then $P$ and $Q$ can be embedded in orthogonal Latin squares of order
$k^4$ and any order greater than or equal to $3k^4$ where $2^a=k\geq2n>2^{a-1}$ for some integer $a$. \end{theorem}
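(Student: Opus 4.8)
The plan is to reduce the theorem to a single core construction at order $k^4$ and then to let \tref{3n} supply all the larger orders. Once $P$ and $Q$ have been embedded in a genuine pair of orthogonal Latin squares of order $k^4$, \tref{3n} immediately embeds that pair — and hence the original copies of $P$ and $Q$, since embeddings compose — into a pair of orthogonal Latin squares of every order $t \ge 3k^4$. So the whole problem is to build one orthogonal pair of order $k^4$ containing $P$ and $Q$, which is why the statement jumps from $k^4$ directly to $t\ge 3k^4$ with nothing claimed in between.

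For the core construction I would first dispose of the two squares separately: since $k=2^{a}\ge 2n$, \tref{Evans} lets me embed $P$ in a Latin square $A$ and $Q$ in a Latin square $B$, both of order $k$. The difficulty — and it is essentially the whole difficulty — is that \tref{Evans} gives no control over orthogonality, so $A$ and $B$ are almost never orthogonal, and no Kronecker-type combination of two non-orthogonal squares can ever be orthogonal (knowing both entries in a cell of $A\otimes(\cdots)$ and $B\otimes(\cdots)$ fails to recover the cell precisely because $A\not\perp B$). Hence all the orthogonality must be manufactured from a genuinely orthogonal ingredient, and the natural candidate is a complete set of $k-1$ mutually orthogonal Latin squares of order $k$, which exists exactly because $k=2^{a}$ is a prime power; this is the role of the hypothesis $k=2^a$.

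My strategy is therefore to route orthogonality through such a complete set using a squaring construction of the type in \tref{one-to-many}, placing the arbitrary Latin square carrying $P$ (respectively $Q$) \emph{passively} inside an output square whose orthogonality to its mates is forced by the orthogonal ingredients alone. The construction of \tref{one-to-many} hosts only one arbitrary Latin square this way (the square $\mathcal{B}$), its mates $\mathcal{X}_k$ being dictated by the $F_k$; so a single squaring, from order $k$ to $k^2$, can accommodate only one of $P,Q$ passively. To place \emph{both} $P$ and $Q$ in orthogonal squares I expect to iterate the squaring a second time, passing from order $k^2$ to order $(k^2)^2=k^4$, which is exactly where the exponent $4$ comes from. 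Concretely, I would build from $A$ and the order-$k$ MOLS a family of MOLS of order $k^2$ one of which contains $P$, then feed this family, together with an order-$k^2$ Latin square containing $Q$ (again by \tref{Evans}, since $k^2\ge 2n$), into a second squaring, arranging the coordinates so that the copy of $P$ is carried through to the order-$k^4$ stage while the copy of $Q$ lands in a square orthogonal to it.

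The main obstacle, and the step needing the most care, is precisely this coordination: ensuring that after the second squaring the square carrying $P$ and the square carrying $Q$ are orthogonal and that neither copy is destroyed in transit. I would verify this exactly as in the proof of \tref{one-to-many} — assume two cells of the two large squares agree in both coordinates, write out the resulting coordinate equations, and use the orthogonality together with the Latin (cancellation) property of the underlying order-$k$ MOLS to force the two cells to coincide — so that orthogonality reduces throughout to injectivity of explicit coordinate maps. Once the order-$k^4$ pair is in hand, \tref{3n} completes the proof for all orders $t\ge 3k^4$.
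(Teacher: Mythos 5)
A preliminary remark: \tref{D&Y} is not proved in this paper at all --- it is quoted from \cite{Donovan} and used as a black box in the corollary following \tref{many-squares} --- so your proposal can only be measured against the construction in \cite{Donovan} and on its own merits. Your outer reduction is fine: once a pair of orthogonal Latin squares of order $k^4$ containing $(P,Q)$ exists, \tref{3n} supplies every order $\ge 3k^4$, and you correctly identify both the role of $k=2^a$ (a complete set of MOLS of order $k$) and the fact that Kronecker-type products of the two non-orthogonal Evans completions cannot be orthogonal. The gap is in the core construction, and it is fundamental. Embedding a pair of orthogonal partial Latin squares \emph{as a pair} means that the copy of $P$ in $L_1$ and the copy of $Q$ in $L_2$ occupy the \emph{same} cells, i.e.\ there is one common pair of row and column injections; this is the sense of \cite{Lindner}, \cite{HRW} and \cite{Donovan}, it is the only sense in which the hypothesis $P\perp Q$ has any content, and it is logically forced (two distinct cells carrying the same ordered pair of symbols could never map into a pair of orthogonal Latin squares). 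Your argument never uses the orthogonality of $P$ and $Q$ --- always a warning sign --- and its very first move, embedding $P$ and $Q$ by two \emph{independent} applications of \tref{Evans}, discards the correspondence between their filled cells; nothing later restores it. Any correct proof must use $P\perp Q$ to coordinate where the two copies are placed, and that coordination is exactly the main content of the construction in \cite{Donovan}.

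Second, even setting alignment aside, the mechanism by which $P$ is supposed to reach order $k^4$ fails concretely. In a squaring of the type in \tref{one-to-many}, only the single passive slot $\mathcal{B}$ can carry a prescribed square; the mates are determined by the ingredient MOLS and do not inherit copies of them in any usable position. Explicitly, in your second squaring with ingredients $G_1=\mathcal{B}^{(1)}\supseteq P,\,G_2,\dots$ of order $k^2$ and passive square $L'\supseteq Q$, the copy of $Q$ lies inside a single block of $\mathcal{B}^{(2)}$ (that is how the passive slot works), so an aligned copy of $P$ would have to occupy the corresponding cells of a single block of some mate $\mathcal{X}^{(2)}_j$. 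But the entry of $\mathcal{X}^{(2)}_j$ in cell $((p_0,r),(q_0,c))$ is $(G_j(G_1(p_0,r),q_0),G_j(G_1(p_0,q_0),c))$, whose first component is injective in $r$ and whose second is injective in $c$; hence no symbol is repeated inside any block of any mate, and no partial Latin square with a repeated symbol --- in particular no generic $P$ --- can be embedded there. So after your second squaring, $Q$ sits in $\mathcal{B}^{(2)}$ while $P$ sits in none of the squares orthogonal to it, let alone in the matching cells. The step you flag as ``needing the most care'' is thus not delicate bookkeeping but the place where a genuinely different idea is required: a device that uses the pairs $(P(r,c),Q(r,c))$ to coordinate the two placements. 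Compare \tref{many-squares} in this paper, which does provide two passive slots $\mathcal{B}_1,\mathcal{B}_2$, but precisely for that reason must demand that its inputs $A_1\perp A_2$ and $B_1\perp B_2$ already be orthogonal pairs --- which is what \cite{Donovan} is needed to produce in the first place.
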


\begin{theorem}\label{many-squares} Let $A_1=[A_1(i,j)]$, $A_2=[A_2(i,j)]$ and $B_1=[B_1(i,j)]$, $B_2=[B_2(i,j)]$ be a
pair of orthogonal Latin squares of order $n$. Let $C_1=[C_1(i,j)],
\dots, C_t=[C_t(i,j)]$ be $t$ mutually orthogonal Latin squares of order $n$. Then the  squares \begin{align*} {\cal
B}_1&=\{((p,r),(q,c),(A_1(p,q),B_1(r,c)))\},\\ {\cal
B}_2&=\{((p,r),(q,c),(A_2(p,q),B_2(r,c)))\},\\ {\cal
X}_{i,f(i)}&=\{((p,r),(q,c),(C_{i}(p,B_1(r,c)),C_{f(i)}(q,B_2(r,c)))\}, \end{align*} where $i\in[t]=\{0,\ldots,t-1\}$
and $f:[t]\rightarrow [t]$ is a bijection,
form a  set of $t+2$ mutually orthogonal Latin squares of order $n^2$. \end{theorem}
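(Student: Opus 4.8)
The plan is to follow the template established in the proof of \tref{one-to-many}: first verify that each of the three families of arrays is a Latin square of order $n^2$, and then establish the pairwise orthogonalities in turn, namely ${\cal B}_1\perp{\cal B}_2$, next ${\cal X}_{i,f(i)}\perp{\cal B}_1$ and ${\cal X}_{i,f(i)}\perp{\cal B}_2$ for each $i\in[t]$, and finally ${\cal X}_{i,f(i)}\perp{\cal X}_{j,f(j)}$ for $i\neq j$. Throughout I would regard a cell of an order-$n^2$ array as a pair $((p,r),(q,c))$ with row index $(p,r)$ and column index $(q,c)$ ranging over $[n]\times[n]$, assume two cells $((p,r),(q,c))$ and $((p',r'),(q',c'))$ produce equal entries, and deduce that the two cells must coincide.

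For the Latin property I would fix a row $(p,r)$ and check that the entry map on columns $(q,c)$ is a bijection onto $[n]\times[n]$, with the fixed-column case being symmetric. For ${\cal B}_1$ and ${\cal B}_2$ this is immediate, since with $p$ fixed $A_1(p,q)$ runs over $[n]$ as $q$ varies and with $r$ fixed $B_1(r,c)$ runs over $[n]$ as $c$ varies. For ${\cal X}_{i,f(i)}$ the verification is sequential: given a target pair $(x,y)$, the first coordinate $x=C_{i}(p,B_1(r,c))$ determines $B_1(r,c)$ and hence $c$ (using that $C_i$ and $B_1$ are Latin with $p$ and $r$ fixed), after which $y=C_{f(i)}(q,B_2(r,c))$ determines $q$ uniquely.

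The orthogonality steps proceed by peeling off equations using that the relevant squares are Latin or orthogonal. For ${\cal B}_1\perp{\cal B}_2$ the equal-entry hypothesis supplies $A_1(p,q)=A_1(p',q')$, $A_2(p,q)=A_2(p',q')$, $B_1(r,c)=B_1(r',c')$ and $B_2(r,c)=B_2(r',c')$; orthogonality of $(A_1,A_2)$ forces $(p,q)=(p',q')$ and orthogonality of $(B_1,B_2)$ forces $(r,c)=(r',c')$. For ${\cal X}_{i,f(i)}\perp{\cal B}_1$, the ${\cal B}_1$ component gives $B_1(r,c)=B_1(r',c')$; substituting this common value into $C_{i}(p,B_1(r,c))=C_{i}(p',B_1(r',c'))$ yields $p=p'$ since $C_i$ is Latin, whereupon $A_1(p,q)=A_1(p',q')$ gives $q=q'$, and then $C_{f(i)}(q,B_2(r,c))=C_{f(i)}(q',B_2(r',c'))$ gives $B_2(r,c)=B_2(r',c')$; combined with $B_1(r,c)=B_1(r',c')$ and orthogonality of $(B_1,B_2)$ this forces $(r,c)=(r',c')$. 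The case ${\cal X}_{i,f(i)}\perp{\cal B}_2$ is the mirror image, starting instead from $B_2(r,c)=B_2(r',c')$ and the $C_{f(i)}$ equation.

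The crucial step, which carries the real content, is ${\cal X}_{i,f(i)}\perp{\cal X}_{j,f(j)}$ for $i\neq j$. Here the equal-entry hypothesis supplies $C_{i}(p,B_1(r,c))=C_{i}(p',B_1(r',c'))$ together with $C_{j}(p,B_1(r,c))=C_{j}(p',B_1(r',c'))$; since $i\neq j$ the squares $C_i,C_j$ are orthogonal, so $(p,B_1(r,c))=(p',B_1(r',c'))$, giving $p=p'$ and $B_1(r,c)=B_1(r',c')$. The reason $f$ is required to be a \emph{bijection} surfaces in the second coordinate: $i\neq j$ forces $f(i)\neq f(j)$, so $C_{f(i)},C_{f(j)}$ are also orthogonal, and the analogous pair of equations yields $q=q'$ and $B_2(r,c)=B_2(r',c')$. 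A final appeal to orthogonality of $(B_1,B_2)$ then gives $(r,c)=(r',c')$, so the cells coincide. Were $f$ merely an arbitrary function one could have $f(i)=f(j)$, losing the orthogonality needed to separate the second coordinate, so the bijectivity hypothesis is exactly what makes this last step go through.
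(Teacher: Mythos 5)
Your proposal is correct and follows essentially the same route as the paper's proof: the same peeling-off of equations, using orthogonality of $C_i,C_j$ and of $C_{f(i)},C_{f(j)}$ (guaranteed by the bijectivity of $f$) followed by orthogonality of $(B_1,B_2)$ for the ${\cal X}$-versus-${\cal X}$ case, and the same substitution order ($B_1$ value into the $C_i$ equation, then $A_1$, then $C_{f(i)}$, then $(B_1,B_2)$-orthogonality) for the ${\cal X}$-versus-${\cal B}$ cases. The only cosmetic differences are that you verify the Latin property of ${\cal X}_{i,f(i)}$ by solvability for a target entry rather than by the paper's injectivity-by-contradiction, and you spell out the direct-product argument for ${\cal B}_1$, ${\cal B}_2$ that the paper simply records as clear.
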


\begin{proof} The arrays ${\cal B}_1$ and ${\cal B}_2$ may be obtained by taking direct products, so it is clear that
they are orthogonal Latin squares.

Assume that the array ${\cal X}_{\alpha,\beta}$ is not a Latin square, for some $\alpha,\beta$. Then there exists
$(p,r)$ such that
$(C_{\alpha}(p,B_1(r,c)),C_{\beta}(q,B_2(r,c))=(C_{\alpha}(p,B_1(r,c')),C_{\beta}(q',B_2(r,c'))$, for some
$(q,c),(q',c')$ with $(q,c)\ne (q',c')$, or there exists $(q,c)$ such that
$(C_{\alpha}(p,B_1(r,c)),C_{\beta}(q,B_2(r,c))$ $=(C_{\alpha}(p',B_1(r',c)),C_{\beta}(q,B_2(r',c))$, for some
$(p,r),(p',r')$ with $(p,r)\ne (p',r')$. The former case implies \begin{align}
C_{\alpha}(p,B_1(r,c))&=C_{\alpha}(p,B_1(r,c')),\label{many-sq1}\\
C_{\beta}(q,B_2(r,c))&=C_{\beta}(q',B_2(r,c')).\label{many-sq2} \end{align} By \eqref{many-sq1} $c=c'$ and so
\eqref{many-sq2} implies $q=q'$, a contradiction. The latter case implies \begin{align}
C_{\alpha}(p,B_1(r,c))&=C_{\alpha}(p',B_1(r',c)),\label{many-sq3}\\
C_{\beta}(q,B_2(r,c))&=C_{\beta}(q,B_2(r',c)).\label{many-sq4} \end{align} But then \eqref{many-sq4} implies $r=r'$ and
by \eqref{many-sq3} $p=p'$, a contradiction. Hence ${\cal
X}_{\alpha,\beta}$ is a Latin square.

Next take distinct $\alpha$ and $\gamma$, and consequently distinct $\beta$ and $\delta$, where $\beta=f(\alpha)$ and
$\delta=f(\gamma)$. Then assume that for distinct cells $((p,r),(q,c))$
and $((p',r'),(q',c'))$ \begin{align*}
(C_{\alpha}(p,B_1(r,c)),C_{\beta}(q,B_2(r,c)))&=(C_{\alpha}(p',B_1(r',c')),C_{\beta}(q',B_2(r',c'))),\\
(C_{\gamma}(p,B_1(r,c)),C_{\delta}(q,B_2(r,c)))&=(C_{\gamma}(p',B_1(r',c')),C_{\delta}(q',B_2(r',c'))). \end{align*}
Then \begin{align}
C_{\alpha}(p,B_1(r,c))&=C_{\alpha}(p',B_1(r',c')),\label{many-sq15}\\
C_{\beta}(q,B_2(r,c))&=C_{\beta}(q',B_2(r',c')),\label{many-sq16}\\
C_{\gamma}(p,B_1(r,c))&=C_{\gamma}(p',B_1(r',c')),\label{many-sq17}\\
C_{\delta}(q,B_2(r,c))&=C_{\delta}(q',B_2(r',c')).\label{many-sq18} \end{align} But $C_{\alpha}$ is orthogonal to
$C_{\gamma}$  and so Equations \eqref{many-sq15} and \eqref{many-sq17} imply $p=p'$ and $B_1(r,c)=B_1(r',c')$. Further
$C_{\beta}$ is orthogonal to $C_{\delta}$ and so Equations
\eqref{many-sq16} and \eqref{many-sq18} imply $q=q'$ and $B_2(r,c)=B_2(r',c')$. Finally $B_1$ and $B_2$ are orthogonal
and so $r=r'$ and $c=c'$. But this contradicts the assumption that the
cells $((p,r),(q,c))$ and $((p',r'),(q',c'))$ are distinct. Hence ${\cal X}_{\alpha,\beta}$ and  ${\cal
X}_{\gamma,\delta}$ are orthogonal.

Finally we prove that ${\cal B}_1$ and ${\cal X}_{\alpha,\beta}$ are orthogonal. Assume this is not the case and that
there exist distinct cells $((p,r),(q,c))$ and  $((p',r'),(q',c'))$ such
that \begin{align*} (A_1(p,q),B_1(r,c))&=(A_1(p',q'),B_1(r',c')),\\
(C_\alpha(p,B_1(r,c)),C_\beta(q,B_2(r,c)))&=(C_\alpha(p',B_1(r',c')),C_\beta(q',B_2(r',c'))). \end{align*} Then
\begin{align} A_1(p,q)&=A_1(p',q'),\label{meq1}\\ B_1(r,c)&=B_1(r',c'),\label{meq2}\\
C_\alpha(p,B_1(r,c))&=C_\alpha(p',B_1(r',c')),\label{meq3}\\
C_\beta(q,B_2(r,c))&=C_\beta(q',B_2(r',c')).\label{meq4} \end{align}

Since $C_\alpha$ is a Latin square substituting Equation \eqref{meq2} into Equation \eqref{meq3} implies $p=p'$. Now
since $A_1$ is a Latin square Equation \eqref{meq1} implies $q=q'$. Then,
since $C_\beta$ is a Latin square, Equation \eqref{meq4} implies  $B_2(r,c)=B_2(r',c')$. But $B_1$ and $B_2$ are
orthogonal so Equation \eqref{meq2} then gives $r=r'$ and $c=c'$. Consequently
${\cal B}_1$ and ${\cal X}_{\alpha,\beta}$ are orthogonal. Similarly we can show  ${\cal B}_2$ and ${\cal
X}_{\alpha,\beta}$ are orthogonal.

\end{proof}

\begin{corollary} For any $t\ge 2$, a pair of mutually orthogonal partial Latin squares of order $n$ can be embedded in
a set of $t$ mutually orthogonal Latin squares of polynomial
order with respect to $n$. \end{corollary}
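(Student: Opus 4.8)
The plan is to feed the quartic embedding of \tref{D&Y} into the squaring construction of \tref{many-squares}, choosing the intermediate order large enough to supply all the orthogonal mates we need. Let $k=2^a$ be the power of two with $k\ge 2n>2^{a-1}$, so that $k<4n$ and $k^4<256n^4$. I would fix $M$ to be a power of two with $M\ge 3k^4$ and $M\ge t+1$; since consecutive powers of two differ by a factor of two, such an $M$ exists with $M\le 2\max(3k^4,\,t+1)$. Consequently, for each fixed $t$ we have $M=O(n^4)$, so the order $M^2$ of the squares produced below is $O(n^8)$, which is polynomial in $n$.

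First I would invoke \tref{D&Y} to embed the given pair $(P,Q)$ of orthogonal partial Latin squares of order $n$ into a pair $(A_1,A_2)$ of orthogonal Latin squares of order $M$; this is permitted because $M\ge 3k^4$. As $M$ is a prime power there is a family of $M-1$ mutually orthogonal Latin squares of order $M$, from which, since $M-1\ge t$, I would pick a pair $B_1,B_2$ and a further $t-2$ squares $C_1,\dots,C_{t-2}$. Applying \tref{many-squares} to $A_1,A_2,B_1,B_2$ and $C_1,\dots,C_{t-2}$ (with $f$ the identity, say) then returns a set of $(t-2)+2=t$ mutually orthogonal Latin squares $\{{\cal B}_1,{\cal B}_2\}\cup\{{\cal X}_{i,f(i)}\}$ of order $M^2$; when $t=2$ this degenerates to the pair $({\cal B}_1,{\cal B}_2)$.

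It then remains to verify that $(P,Q)$ is genuinely embedded in this set, and here I would exploit the direct-product shape of ${\cal B}_1$ and ${\cal B}_2$. Fixing $r=c=0$ and restricting ${\cal B}_1$ and ${\cal B}_2$ to the rows $(p,0)$ and columns $(q,0)$ leaves entries $(A_1(p,q),B_1(0,0))$ and $(A_2(p,q),B_2(0,0))$ respectively; the second coordinates are now constant, so $p\mapsto(p,0)$, $q\mapsto(q,0)$, together with the symbol injections $x\mapsto(x,B_1(0,0))$ and $x\mapsto(x,B_2(0,0))$, embed $A_1$ in ${\cal B}_1$ and $A_2$ in ${\cal B}_2$ through common row and column maps. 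Composing with the embedding of $(P,Q)$ in $(A_1,A_2)$ shows that $P$ sits inside ${\cal B}_1$ and $Q$ inside ${\cal B}_2$. I expect the only delicate step to be the joint choice of $M$: it must be an admissible embedding order for \tref{D&Y}, a prime power so that at least $t$ mutually orthogonal Latin squares of order $M$ are available, and small enough to keep $M^2$ polynomial in $n$. Confining the choice to powers of two meets all three requirements simultaneously, and every remaining orthogonality check is already supplied by \tref{many-squares}.
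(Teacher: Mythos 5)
Your proposal is correct and follows essentially the same route as the paper: embed the pair via Theorem~\ref{D&Y} into an orthogonal pair of Latin squares of prime-power order, then feed that pair, together with MOLS of that order, into Theorem~\ref{many-squares}. The only difference is one of explicitness: the paper embeds at order exactly $k^4$ (obtaining $k^4+1$ mates of order $k^8$) and relegates the case of larger $t$ to a closing remark, whereas you choose the intermediate order $M\ge\max(3k^4,\,t+1)$ as a power of two up front and also spell out the direct-product embedding of $(P,Q)$ into $({\cal B}_1,{\cal B}_2)$, which the paper simply asserts.
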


\begin{proof} Let $A_1$ and $A_2$ be two orthogonal partial Latin squares of order $n$. By \cite{Donovan} we can embed
them into two orthogonal Latin squares ${\cal A}_1$ and ${\cal A}_2$ of
order $k^4$ where $2^a=k\geq2n>2^{a-1}$. As $k$ is a power of a prime, there are at least $k^4-1$ MOLS$(k^4)$. So there
are at least $(k^4-1+2)$ MOLS$(k^8)$ two of which contains the copies of
${\cal A}_1$ and ${\cal A}_2$. Similarly by choosing the order of ${\cal A}_1$ and ${\cal A}_2$ larger, one can obtain
as many orthogonal mates as one wants at the expense of increasing the
order of the squares into which the partial Latin squares are embedded. \end{proof}

Obviously Theorem \ref{many-squares} can also be used to construct mutually orthogonal Latin squares of order $n^2$ for
a given integer $n$. For example, in the literature only $8$ mutually
orthogonal Latin squares of order $576$ were know to exist, but the following corollary constructs $9$ MOLS$(576)$.

\begin{corollary} There are $9$ mutually orthogonal Latin Squares of order $576$. \end{corollary}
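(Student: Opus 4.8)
The plan is to apply Theorem~\ref{many-squares} with a carefully chosen value of $n$. Since $576 = 24^2$, I would set $n = 24$, so that the target order $n^2 = 576$ comes out exactly right. The theorem produces a set of $t+2$ mutually orthogonal Latin squares of order $n^2$ out of a pair of orthogonal Latin squares of order $n$ (here played by $A_1, A_2$ and $B_1, B_2$) together with $t$ MOLS of order $n$ (the $C_i$). To reach $9$ MOLS$(576)$ I therefore need $t+2 \ge 9$, i.e.\ $t \ge 7$, which requires $7$ MOLS of order $24$, and I separately need at least one pair of orthogonal Latin squares of order $24$ to supply $A_1,A_2,B_1,B_2$.

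The key step is thus an existence input: I must invoke the known lower bound on the maximum number of MOLS of order $24$. From the standard tables (for instance \cite{Handbook}) it is known that there exist at least $7$ MOLS$(24)$. Taking $7$ such squares gives me both the required $t = 7$ squares $C_1,\dots,C_7$ and, since $7 \ge 2$, a fortiori an orthogonal pair among them to serve as $(A_1,A_2)$ and $(B_1,B_2)$ in the construction. First I would state that $N(24) \ge 7$ (where $N(m)$ denotes the number of MOLS of order $m$), citing the table. Next I would feed these squares into Theorem~\ref{many-squares} with $n=24$ and $t=7$, choosing any bijection $f:[7]\to[7]$, for instance the identity. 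The theorem then immediately yields $t+2 = 9$ mutually orthogonal Latin squares of order $24^2 = 576$.

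The main obstacle is not mathematical depth but verifying the numerical existence claim: I must confirm that the literature indeed guarantees $7$ MOLS$(24)$, since the whole corollary hinges on plugging the right count into $t+2$. This is precisely the observation recorded in the remark preceding the corollary, namely that only $8$ MOLS$(576)$ were previously known, so obtaining $9$ depends entirely on having $N(24)\ge 7$ available as an ingredient. Once that count is in hand, the rest is a direct substitution into the already-proved Theorem~\ref{many-squares}, with no further calculation required. In short, I would write: by the tables of \cite{Handbook} there are at least $7$ MOLS$(24)$; applying Theorem~\ref{many-squares} with $n=24$ and $t=7$ produces $9$ mutually orthogonal Latin squares of order $576$.
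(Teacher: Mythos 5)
Your proposal is correct and follows essentially the same route as the paper: cite the tables in \cite{Handbook} for the existence of $7$ MOLS$(24)$, then apply Theorem~\ref{many-squares} with $n=24$ and $t=7$ to obtain $7+2=9$ MOLS$(576)$. Your additional care in noting that the orthogonal pairs $(A_1,A_2)$ and $(B_1,B_2)$ can be drawn from the same set of $7$ MOLS$(24)$ is a detail the paper leaves implicit, but it is the same argument.
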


\begin{proof} By \cite{Handbook} Table 3.87 there are at least $7$ mutually orthogonal Latin squares of order $24$.
When applied in the construction given in Theorem \ref{many-squares}, we
may obtain $7+2=9$ mutually orthogonal Latin squares of order $24^2=576$. \end{proof}

\end{document}